\documentclass[12pt,a4paper]{amsart}
\usepackage{a4wide,fullpage,setspace}

\usepackage[T1]{fontenc}
\usepackage{lmodern}
\usepackage{microtype}
\usepackage{mathtools,amssymb}
\usepackage{tikz-cd}
\usepackage[numbers,sort&compress]{natbib}
\definecolor{dark-red}{rgb}{0.5,0.15,0.15}
\usepackage[colorlinks=true,linkcolor=black,citecolor=dark-red,urlcolor=dark-red]{hyperref}
\swapnumbers

\makeatletter
\let\P\@undefined
\let\leq\@undefined
\let\geq\@undefined
\let\vec\@undefined
\let\phi\@undefined
\let\epsilon\@undefined
\makeatother
\newcommand{\geq}{\geqslant}
\newcommand{\vec}{\overrightarrow}
\newcommand{\leq}{\leqslant}
\newcommand{\phi}{\varphi}
\newcommand{\epsilon}{\varepsilon}

\newcommand{\Top}{\mathop{\mathsf{Top}}}
\newcommand{\TOP}{\mathop{\mathsf{TOP}}\nolimits}
\newcommand{\Pcat}{\mathcal P}
\newcommand{\G}{\mathcal G}
\newcommand{\M}{\mathcal M}

\newcommand{\Path}{\mathbb P^{\mathrm{top}}}
\newcommand{\Glob}{\operatorname{Glob}^{\Pcat}}

\newcommand{\cocartesian}{\arrow[lu, phantom, "\ulcorner"{font=\Large}, pos=0]}

\newtheorem{theorem}{Theorem}[section]

\newtheorem{lemma}[theorem]{Lemma}

\theoremstyle{definition}

\newtheorem{remark}[theorem]{Remark}

\title{The q-model category of multipointed d-spaces is not left proper}
\author[P. Gaucher]{Philippe Gaucher}
\address{Universit\'e Paris Cit\'e, CNRS, IRIF, F-75013, Paris, France}
\urladdr{\url{https://www.irif.fr/~gaucher}}
\subjclass[2020]{Primary 55U35; Secondary 18N40, 68Q85}
\keywords{directed algebraic topology, multipointed \(d\)-space,
	execution path, model category, left properness, globular cell,
	reparametrization category}

\begin{document}

\begin{abstract}
	We prove that the q-model category of \(\Pcat\)-multipointed \(d\)-spaces for \(\Pcat\in\{\G,\M\}\) is not left proper by constructing a weak equivalence whose pushout along a q-cofibration obtained by attaching a single \(1\)-dimensional globular cell is not a weak equivalence.  The counterexample is constructed in the category of \(\Delta\)-Hausdorff \(\Delta\)-generated spaces.  For \(\Pcat=\G\), all objects are automatically saturated, and for \(\Pcat=\M\), the original weak equivalence is between saturated objects. The failure is caused by a family of nonconstant execution paths that converges in the ambient mapping space to a constant map which is not an execution path; after the globular cell is attached, this degeneration causes two previously distinct path components to merge.
\end{abstract}

\maketitle
\setcounter{tocdepth}{1}
\tableofcontents
\hypersetup{linkcolor = dark-red}

\section{Introduction}

Multipointed \(d\)-spaces provide a topological model of directed systems: their distinguished points represent states, and their execution paths represent admissible evolutions between states.  For either interval reparametrization category \(\Pcat=\G\) or \(\Pcat=\M\), the q-model structure organizes their homotopy theory by declaring a map to be a weak equivalence when it induces a bijection on states and a weak homotopy equivalence on the space of execution paths; see \cite{Moore3}.  A natural stability question is whether this model structure is left proper.  Left properness requires the pushout of a weak equivalence along a cofibration to remain a weak equivalence and therefore controls, in particular, the homotopy invariance of globular cell attachments.

The analogous q-model category of flows is left proper \cite{leftproperflow}. Multipointed \(d\)-spaces, however, have an additional point-set topological feature: their execution paths are actual maps into an underlying topological space, and their topology is inherited from an ambient mapping space.  Forming a colimit can consequently create new limiting behavior among execution paths.  More precisely, a family of nonconstant old execution paths may converge in the ambient mapping space to a constant map which is not itself an execution path. After a new path has been attached, composition with this family can produce a path in one execution-path space joining two paths which lie in distinct components of the other. 

We show that this phenomenon already occurs when a single \(1\)-dimensional globular cell is attached. For each \(\Pcat\in\{\G,\M\}\), we construct a weak equivalence \(s: A\to B\) and a cobase change \(j: A\to X\) of the generating q-cofibration
\[
\Glob(\varnothing)\longrightarrow
\Glob(\mathsf{D}^{0})
\]
such that the pushout \(\overline{s}: X\to Y\) of \(s\) along \(j\) is not a weak equivalence.  Since \(j\) is a q-cofibration, it follows that the q-model category of \(\Pcat\)-multipointed \(d\)-spaces is not left proper. This does not contradict left properness of the Quillen equivalent q-model category of flows since left properness is not invariant under Quillen equivalence. The counterexample takes place in the category of \(\Delta\)-Hausdorff \(\Delta\)-generated spaces, and the failure is already detected on \(\pi_0\) of the execution-path spaces.  In particular, it does not involve the state-identification generator \(R\).  Moreover, for \(\Pcat=\M\), both objects \(A\) and \(B\) are saturated in the sense of \cite[Definition~18]{Moore3}, so saturation of the input objects does not prevent the phenomenon (see Remark~\ref{rem:saturated}).

Section~\ref{sec:2} recalls the conventions and the facts about multipointed \(d\)-spaces used in the proof.  Section~\ref{sec:3} constructs the weak equivalence \(s: A\to B\).  Section~\ref{sec:4} attaches the globular cell and proves that the resulting map fails to be a weak equivalence.

\section{Conventions}
\label{sec:2}

Let \(I=[0,1]\), and let \(\Top\) be the category of \(\Delta\)-Hausdorff \(\Delta\)-generated spaces.  It is cartesian closed, and the mapping space \(\TOP(-,-)\) is obtained by applying \(\Delta\)-kelleyfication to the compact-open topology; see \cite[Section~2 and Appendix~B]{leftproperflow} and \cite[Section~2]{Moore3}.  We work with either \(\Pcat=\G\), whose morphisms are the nondecreasing homeomorphisms between nontrivial compact intervals, or \(\Pcat=\M\), whose morphisms are the nondecreasing surjections between such intervals.  In particular, every element of \(\Pcat(1,1)\) is surjective and \(\G(1,1)\subset\M(1,1)\).  These categories and their topologies are recalled in \cite[Section~2]{Moore3}.

The normalized composition of two continuous paths \(x,y:I\to U\) with \(x(1)=y(0)\) is the continuous path \(x*_Ny:I\to U\) such that \((x*_Ny)(t)=x(2t)\) for \(0\leq t\leq 1/2\) and \((x*_Ny)(t)=y(2t-1)\) for \(1/2\leq t\leq 1\).

A \(\Pcat\)-multipointed \(d\)-space \(X\) consists of a topological space \(\lvert X\rvert\), called the underlying topological space, a set of states \(X^0\subset\lvert X\rvert\), and a set \(\Path X\) of continuous paths \(I\to\lvert X\rvert\), called execution paths, whose endpoints are states.  The set \(\Path X\) is closed under precomposition by \(\Pcat(1,1)\) and under normalized composition.  For \(u,v\in X^0\), let \(\Path_{u,v}X\) denote the set of execution paths from \(u\) to \(v\).  It is equipped with the \(\Delta\)-kelleyfication of the subspace topology induced by
\[
\Path_{u,v}X\subset\TOP(I,\lvert X\rvert).
\]
The total execution-path space is the coproduct of these spaces over \(u,v\in X^0\); see \cite[Section~3]{Moore3}.

Let \(Z\in\Top\). The \(\Pcat\)-multipointed \(d\)-space \(\Glob(Z)\) is defined as follows. Its underlying space is the quotient in \(\Top\)
	\[
	\lvert\Glob(Z)\rvert
	=
	\frac{\{0,1\}\sqcup\bigl(Z\times[0,1]\bigr)}
	{(z,0)\sim 0,\ (z,1)\sim 1\quad(z\in Z)}.
	\]
	Its set of states is
	\[
	\Glob(Z)^0=\{0,1\}.
	\]
	For every \(z\in Z\), let
	\[
	\delta_z:[0,1]\longrightarrow\lvert\Glob(Z)\rvert,
	\qquad
	\delta_z(t)=[z,t].
	\]
	The set of execution paths is
	\[
	\mathbb P^{\mathrm{top}}\Glob(Z)
	=
	\left\{
	\delta_z\circ\phi
	\;\middle|\;
	z\in Z,\ \phi\in\Pcat(1,1)
	\right\}.
	\]
Thus every execution path goes from the initial state \(0\) to the final state \(1\). 

We shall use two facts about the q-model structure.  A map \(f: X\to Y\) is a weak equivalence if and only if \(f^0: X^0\to Y^0\) is a bijection and \(\Path f:\Path X\to\Path Y\) is a weak homotopy equivalence.  A set of generating q-cofibrations consists of the maps
\begin{equation} \label{eq:generator}
\Glob(\mathsf S^{n-1})
\longrightarrow\Glob(\mathsf D^n)
\qquad(n\geq0),
\end{equation}
with $\mathsf{S}^n$ the $n$-dimensional sphere, $\mathsf{D}^n$ the $n$-dimensional disk, \(\mathsf{S}^{-1}=\varnothing\) and \(\mathsf{D}^0=\{*\}\), together with
\[
C:\varnothing\longrightarrow\{0\},
\qquad
R:\{0,1\}\longrightarrow\{0\}.
\]
This is recalled in \cite[Section~4]{Moore3}.  Finally, the forgetful functor to multipointed spaces is topological, and the execution paths in a colimit are exactly the finite Moore compositions of the images of execution paths in the entries of the diagram, with arbitrary positive lengths adding up to one; this is \cite[Theorem~2]{Moore3}.

\section{The weak equivalence before the globe is attached}
\label{sec:3}

For topological spaces \(K\) and \(Z\), let
\[
 \TOP_{\mathrm{co}}(K,Z)=\{f: K\longrightarrow Z\mid f\text{ is continuous}\}
\]
endowed with the compact-open topology.  Thus a subbasis of this topology consists of the subsets
\[
 [K_0,U]=\{f\in \TOP_{\mathrm{co}}(K,Z)\mid f(K_0)\subset U\},
\]
where \(K_0\subset K\) is compact and \(U\subset Z\) is open. Since \(I\) is compact, the compact-open topology on \(\TOP_{\mathrm{co}}(I,Z)\) is the topology of uniform convergence whenever \(Z\) is metrizable; see \cite[Proposition~A.13]{MR1867354}.  

The space \(\TOP_{\mathrm{co}}(I,I)\) is a convex metrizable subspace of the normed vector space \(\TOP_{\mathrm{co}}(I,\mathbb R)\). It is therefore locally path-connected and hence \(\Delta\)-generated by
\cite[Proposition~3.11]{MR3270173}.  Consequently, \(\TOP(I,I)=\TOP_{\mathrm{co}}(I,I)\). Let
\[
 \mathcal V=\{\gamma\in \TOP_{\mathrm{co}}(I,I)\mid \gamma(0)=\gamma(1)=0\},
\]
which is given the subspace topology.  Explicitly, its topology is induced by the uniform metric
\[
 d_\infty(\gamma,\eta)=\max_{t\in I}\lvert\gamma(t)-\eta(t)\rvert.
\]
Put
\[
 m(\gamma)=\max_{t\in I}\gamma(t),\qquad
 \mathcal E_1=m^{-1}(1),\qquad
 \mathcal E_{>0}=\mathcal V\setminus\{0\}.
\]
The function \(m\) is continuous because
\begin{equation}\label{eq:max-lipschitz}
 \lvert m(\gamma)-m(\eta)\rvert
 \leq d_\infty(\gamma,\eta).
\end{equation}

Both \(\mathcal E_1\) and \(\mathcal E_{>0}\) are \(\Delta\)-generated.  Here is a verification.  The space \(\mathcal V\) is convex in the normed vector space \(\TOP_{\mathrm{co}}(I,\mathbb R)\), and \(\mathcal E_{>0}\) is open in \(\mathcal V\); hence it is metrizable and locally path-connected.  The inequality~\eqref{eq:max-lipschitz} proves that \(m\) is
continuous.  Together with the continuity of scalar multiplication in
\(\TOP_{\mathrm{co}}(I,\mathbb R)\), this shows that the two maps
\[
\mathcal E_{>0}\longrightarrow \mathcal E_1\times(0,1],
\qquad
\gamma\longmapsto
\left(\frac{\gamma}{m(\gamma)},m(\gamma)\right),
\]
and
\[
\mathcal E_1\times(0,1]\longrightarrow\mathcal E_{>0},
\qquad
(\eta,r)\longmapsto r\eta,
\]
are continuous.  A direct calculation shows that they are inverse
homeomorphisms. Here \(\mathcal E_1\times(0,1]\) is equipped with the ordinary product topology. Since \(\mathcal E_{>0}\cong\mathcal E_1\times(0,1]\) is locally path-connected and the projection onto \(\mathcal E_1\) is open, \(\mathcal E_1\) is locally path-connected.  It is metrizable as a subspace of \(\mathcal V\).  A first-countable locally path-connected space is \(\Delta\)-generated by \cite[Proposition~3.11]{MR3270173}.  Consequently the \(\Delta\)-kelleyfication used in the definition of the execution-path topology does not alter either of these two spaces.  

Define two \(\Pcat\)-multipointed \(d\)-spaces on the same underlying multipointed space
\[
 (I,\{0\})
\]
by
\[
 \Path A=\mathcal E_1,
 \qquad
 \Path B=\mathcal E_{>0}.
\]
These are well-defined \(\Pcat\)-multipointed \(d\)-spaces. Since every \(\phi\in\Pcat(1,1)\) is surjective, \(\gamma\phi\) has the same image, and hence the same maximum, as \(\gamma\), for every \(\gamma\in \mathcal V\). Also, the image of a normalized composite of two based loops is the union of their images.  Thus \(\mathcal E_1\) and \(\mathcal E_{>0}\) are both closed under the two required operations, and all their elements are nonconstant. 

The identity of \(I\) induces a map
\begin{equation}\label{eq:s}
 s: A\longrightarrow B
\end{equation}
because \(\mathcal E_1\subset\mathcal E_{>0}\).

\begin{lemma}\label{lem:contractible}
The spaces \(\mathcal E_1\) and \(\mathcal E_{>0}\) are contractible. Consequently \(s\) is a weak equivalence.
\end{lemma}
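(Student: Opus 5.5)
Since \(\mathcal E_{>0}\cong\mathcal E_1\times(0,1]\) by the homeomorphisms constructed above, and \((0,1]\) is contractible, it suffices to prove that \(\mathcal E_1\) is contractible. The plan is an explicit straight-line homotopy to a fixed base loop. Choose \(\eta_0\in\mathcal E_1\), say \(\eta_0(t)=1-\lvert 2t-1\rvert\). For a general \(\gamma\in\mathcal E_1\), the convex combination \((1-\lambda)\gamma+\lambda\eta_0\) stays in \(\mathcal V\). However, its maximum may drop below \(1\), since the maxima of \(\gamma\) and \(\eta_0\) need not be attained at the same time. So I will not homotope inside \(\mathcal E_1\) directly. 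Instead I will build a homotopy inside \(\mathcal E_{>0}\) and then renormalize by \(m\).

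First I check that the convex combination avoids \(0\). If \(\lambda\in[0,1)\), then at a point \(t_\gamma\) where \(\gamma(t_\gamma)=1\) we have \(((1-\lambda)\gamma+\lambda\eta_0)(t_\gamma)\geq 1-\lambda>0\). If \(\lambda=1\), the combination is \(\eta_0\neq0\). Hence \(m\bigl((1-\lambda)\gamma+\lambda\eta_0\bigr)>0\) for all \(\lambda\in I\). Since \(m\) is continuous by \eqref{eq:max-lipschitz}, the formula
\[
H(\gamma,\lambda)=\frac{(1-\lambda)\gamma+\lambda\eta_0}{m\bigl((1-\lambda)\gamma+\lambda\eta_0\bigr)}
\]
defines a map \(\mathcal E_1\times I\to\mathcal E_1\). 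It is continuous because addition and scalar multiplication are continuous in the normed space \(\TOP_{\mathrm{co}}(I,\mathbb R)\). Moreover \(H(\gamma,0)=\gamma\) and \(H(\gamma,1)=\eta_0\). This is a contraction with respect to the ordinary product topology on \(\mathcal E_1\times I\).

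In \(\Top\) the product is the \(\Delta\)-kelleyfication of the ordinary product, and that topology is finer, so \(H\) remains continuous there. Alternatively, \(\mathcal E_1\times I\) is metrizable and locally path-connected, so the two topologies agree. Contractibility of \(\mathcal E_{>0}\) follows from the homeomorphism. Alternatively, one can use the same formula without normalizing, \((\gamma,\lambda)\mapsto(1-\lambda)\gamma+\lambda\eta_0\), which stays nonzero by the same estimate.

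For the weak equivalence, \(s^0\) is the identity on \(\{0\}\), a bijection. The execution-path spaces are \(\Path_{0,0}A=\mathcal E_1\) and \(\Path_{0,0}B=\mathcal E_{>0}\); by the earlier discussion, \(\Delta\)-kelleyfication does not change their topologies. Both spaces are contractible, so the inclusion \(\Path s\) is a weak homotopy equivalence, and hence \(s\) is a weak equivalence. The only delicate point is that a naive convex homotopy leaves \(\mathcal E_1\); the renormalization by \(m\), which is justified by the positivity estimate, resolves this.
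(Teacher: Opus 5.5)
Your proof is correct and follows essentially the paper's argument: a straight-line homotopy to a fixed loop of maximum one, renormalized by \(m\) to stay in \(\mathcal E_1\), with a positivity estimate guaranteeing that the homotopy never meets the zero loop. The differences are cosmetic: the paper uses \(b(t)=4t(1-t)\) and bounds the value at \(t=1/2\) from below, whereas you bound the value at a maximizer of \(\gamma\); and the paper contracts \(\mathcal E_{>0}\) directly with the unnormalized formula, which is your stated alternative, rather than via \(\mathcal E_{>0}\cong\mathcal E_1\times(0,1]\).
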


\begin{proof}
Let \(b(t)=4t(1-t)\).  Thus \(b\in\mathcal E_1\).  A contraction of \(\mathcal E_{>0}\) to \(b\) is
\[
 H_u(\gamma)=(1-u)\gamma+ub.
\]
For \(u>0\), one has \(H_u(\gamma)(1/2)\geq u\), whereas \(H_0(\gamma)=\gamma\neq 0\).  Hence the homotopy never meets the zero loop.

A contraction of \(\mathcal E_1\) to \(b\) is
\[
 K_u(\gamma)=
 \frac{(1-u)\gamma+ub}{m((1-u)\gamma+ub)}.
\]
The denominator is positive by the preceding observation, and \eqref{eq:max-lipschitz} proves continuity.  Every value of \(K\) is a based loop with maximum one.  Thus the two displayed formulas are indeed contractions.

The map \(s\) is the identity on the singleton state set. The induced map on execution-path spaces has nonempty contractible source and target.  It therefore induces a bijection on path components and isomorphisms on all homotopy groups. It is a weak homotopy equivalence, and the characterization of weak equivalences recalled above proves the last assertion.
\end{proof}

Iterated normalized compositions produce execution paths with arbitrarily many intermediate returns to \(0\). These intermediate returns do not affect the contractibility argument of Lemma~\ref{lem:contractible}.  For \(u>0\) and \(0<t<1\),
	\[
	(1-u)\gamma(t)+ub(t)\geq ub(t)>0.
	\]
Thus, at every positive time of the contraction, all intermediate visits to \(0\) have disappeared.  The same observation applies to the contraction of \(\mathcal E_1\) after division by the maximum.  Hence the set of intermediate times at which an execution path passes through \(0\) is not a homotopy invariant and creates no obstruction to contractibility.

\begin{remark}\label{rem:saturated}
All \(\G\)-multipointed \(d\)-spaces are saturated by \cite[Proposition~28]{Moore3}. For \(\Pcat=\M\), both \(A\) and \(B\) are saturated in the sense of \cite[Section~8]{Moore3}.  Indeed, if \(\gamma\phi\in\mathcal E_1\) (respectively \(\gamma\phi\in\mathcal E_{>0}\)) for some \(\phi\in\M(1,1)\), then surjectivity of \(\phi\) gives \(\operatorname{Im}(\gamma\phi)=\operatorname{Im}(\gamma)\).  Hence \(m(\gamma)=1\) (respectively \(\gamma\neq0\)), so \(\gamma\) is already an execution path.  The pushouts below are calculated in the category of all \(\M\)-multipointed \(d\)-spaces; no assertion about pushouts calculated in the reflective subcategory of saturated objects is needed here.
\end{remark}

\section{The globular pushout}
\label{sec:4}

Take the generating q-cofibration \eqref{eq:generator} for \(n=0\):
\begin{equation}\label{eq:i}
 i:\Glob(\varnothing)\longrightarrow\Glob(\mathsf{D}^0).
\end{equation}
The source has two states and no execution paths.  Map both of its states to the unique state \(0\) of \(A\), and use the composite with \(s\) as the attaching map to \(B\).  Form the two pushouts
\[
\begin{tikzcd}[column sep=4.2em,row sep=3em]
 \Glob(\varnothing) \arrow[r] \arrow[d,"i"',rightarrowtail]
   & A \arrow[d,rightarrowtail,"j"'] \arrow[r,"s"] & B \arrow[d,rightarrowtail] \\
 \Glob(\mathsf{D}^0) \arrow[r] & \cocartesian X \arrow[r,"\bar s"] & \cocartesian Y .
\end{tikzcd}
\]
The map \(j:A\to X\) is a q-cofibration, since it is a pushout of the generating q-cofibration \(i\).

The underlying spaces of \(X\) and \(Y\) are the same compact metrizable space
\begin{equation}\label{eq:wedge}
 W=I_{\mathrm{old}}\vee \mathsf{S}^1_{\mathrm{new}}.
\end{equation}
Indeed, the underlying space of \(\Glob(\mathsf{D}^0)\) is an interval \(I_{\mathrm{new}}\), and the attaching map identifies both endpoints with \(0\in I_{\mathrm{old}}\). The space \(W\) is a finite CW complex.  It is therefore Hausdorff, first countable, and locally path-connected, and hence \(\Delta\)-generated by \cite[Proposition~3.11]{MR3270173}. Thus \(W\) already belongs to \(\Top\), and the pushout is unchanged when calculated in \(\Top\). The map \(\bar s\) is the identity on \(W\) and on the singleton state set; only their execution-path structures differ.

Let
\[
 c: I\longrightarrow I_{\mathrm{new}}/(0\sim 1) \cong \mathsf{S}^1_{\mathrm{new}}\subset W,
 \qquad c(t)=[t]
\]
be the execution path supplied by the new globe, traversing the new circle once.  Regard the loop \(b(t)=4t(1-t)\) as an old execution path.  Thus both \(c\) and the normalized composite \(c*_N b\) are execution paths of \(X\), as well as of \(Y\).

The strategy of Lemma~\ref{lem:separated} and Lemma~\ref{lem:connected} is as follows. In \(X\), every old execution path reaches height one, which produces the discrete invariant \(\Lambda\); in \(Y\), the additional old paths \(b_u\) have arbitrarily small positive height, allowing that invariant to disappear in a continuous family.

\begin{lemma}\label{lem:separated}
The paths \(c\) and \(c*_N b\) belong to distinct path components of \(\Path X\).
\end{lemma}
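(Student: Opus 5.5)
We construct a locally constant, integer-valued function \(\Lambda\) on \(\Path X\) that takes different values on \(c\) and on \(c*_N b\). Since a locally constant function is constant on path components, this proves the lemma.

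\textbf{Step 1: describe \(\Path X\).} By \cite[Theorem~2]{Moore3}, every execution path of \(X\) is a finite Moore composition of pieces, with positive lengths summing to one. Each piece is either a reparametrized copy of \(c\), of the form \(c\phi\), or an old execution path \(\gamma\in\mathcal E_1\). Composites of old paths are again in \(\mathcal E_1\), so the old pieces may be grouped into maximal blocks. Every old block then reaches the far endpoint \(1\in I_{\mathrm{old}}\subset W\).

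\textbf{Step 2: define \(\Lambda\).} For a continuous path \(x:I\to W\), let \(\Lambda(x)\) be the number of maximal open subintervals of \(x^{-1}(I_{\mathrm{old}}\setminus\{0\})\) on which \(x\) attains the value \(1\in I_{\mathrm{old}}\). These are the excursions into the old interval that reach height one. The number is finite, because the excursions reaching \(1\) are disjoint open intervals, and uniform continuity gives each of them length bounded below by a positive constant depending on \(x\). We then compute directly: \(\Lambda(c)=0\), since \(c\) stays in the new circle, and \(\Lambda(c*_N b)=1\).

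\textbf{Step 3: \(\Lambda\) is locally constant on \(\Path X\) (the main obstacle).} By the conventions, the topology on \(\Path X\) is the kelleyfication of the uniform topology coming from a metric on \(W\). It therefore suffices to show that \(\Lambda\) is locally constant for the uniform metric restricted to \(\Path X\).

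The key fact is that \(\Lambda\) is \emph{not} locally constant on all of \(\TOP(I,W)\). A path could have a nearly-height-one excursion that a nearby path pushes up to height one, or a shallow dip to \(0\) that splits one excursion into two. The argument must therefore use the structure from Step 1.

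On \(\Path X\), every excursion into \(I_{\mathrm{old}}\setminus\{0\}\) comes from an old block, which has exact maximum \(1\). A new piece \(c\phi\) contributes no such excursion, since \(c\) meets \(I_{\mathrm{old}}\) only at \(0\). The dichotomy is therefore sharp: every excursion of an execution path of \(X\) either reaches \(1\) or stays in the new circle.

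Now fix \(x\in\Path X\) and a path \(y\in\Path X\) with \(d_\infty(x,y)<1/2\). Each \(t\) with \(x(t)=1\) forces \(y(t)\) to lie in the old interval at height greater than \(1/2\). Conversely, each excursion of \(y\) reaching \(1\) forces \(x\) above \(1/2\) at that time. We will then show that these correspondences are bijections on excursions, so that \(\Lambda(y)=\Lambda(x)\).

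The delicate point is ruling out that one old block of \(x\) corresponds to two blocks of \(y\), or the reverse. For this, we would instead count \emph{maximal blocks separated by a traversal of the new circle}. Concretely, define \(\Lambda\) as the number of maximal intervals on which \(x\) stays in \(I_{\mathrm{old}}\) and reaches \(1\). Intermediate returns to \(0\) within an old block do not split such an interval, and separation now requires \(x\) to pass through the antipode \([1/2]\) of the new circle. Both conditions, reaching \(1\) and reaching the antipode, are open conditions, preserved under uniform perturbation smaller than \(1/2\), and with this definition the counting becomes stable.

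Step 3 is where we expect the real work to lie.
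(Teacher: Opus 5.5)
Your approach can be completed, but Step 3 is announced rather than proved. The switch of invariant you make there is also forced, not optional: the excursion count of Step 2 already fails to be locally constant on \(\Path X\) itself. Indeed, the piecewise linear loops \(\gamma_\epsilon\) running \(0\to1\to\epsilon\to1\to0\) all lie in \(\mathcal E_1\subset\Path X\) and vary continuously with \(\epsilon\), yet their count jumps from \(1\) to \(2\) at \(\epsilon=0\). With the block count (components of \(x^{-1}(I_{\mathrm{old}})\) on which \(x\) reaches \(1\)), the missing argument is short. It needs an injection in each direction rather than a bijection on excursions.

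Use the wedge metric on \(W\) with the circle of length \(1\). Then the antipode \(a=[1/2]\) is at distance at least \(1/2\) from \(I_{\mathrm{old}}\), and the old point \(1\) is at distance at least \(1\) from the circle. If \(x\in\Path X\) has \(k\) blocks, choose \(t_1<s_1<t_2<\cdots<s_{k-1}<t_k\) with \(x(t_i)=1\) and \(x(s_i)=a\). This is possible because each new piece \(c\phi\) is surjective onto the circle. Now take \(y\in\Path X\) with \(d_\infty(x,y)<1/2\).
\begin{itemize}
\item Each \(y(s_i)\) lies outside \(I_{\mathrm{old}}\), so the \(t_i\) lie in distinct components of \(y^{-1}(I_{\mathrm{old}})\).
\item Each \(y(t_i)\) is an old point of height \(>1/2\), hence not the wedge point. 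So \(t_i\) lies in the interior of an old piece of \(y\), whose maximal block reaches \(1\) and sits in that component.
\end{itemize}
Thus \(\Lambda(y)\geq\Lambda(x)\), and exchanging \(x\) and \(y\) gives equality.

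The paper's proof is considerably simpler. It uses \(h:W\to I\), equal to the identity on \(I_{\mathrm{old}}\) and to \(0\) on the new circle, and sets \(\Lambda(\gamma)=\max_{t}h(\gamma(t))\). This functional is continuous on the whole mapping space, with no local analysis at all, by continuity of postcomposition and the \(1\)-Lipschitz property of the maximum. The dichotomy you isolate is the same one the paper uses: old blocks reach height \(1\), and new pieces stay at height \(0\). Together with the Moore-composition description of \(\Path X\), it shows that \(\Lambda\) takes only the values \(0\) and \(1\) on \(\Path X\), so its two fibres are clopen. In effect, the lemma only has to separate ``no old block'' from ``at least one old block''. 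Your count is a finer invariant (it separates one block from two, for instance). That is exactly why you run into splitting and merging issues and need the antipode, none of which the statement requires.
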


\begin{proof}
Define a continuous map \(h: W\to I\) by
\[
 h(x)=x\quad(x\in I_{\mathrm{old}}),
 \qquad
 h(x)=0\quad(x\in \mathsf{S}^1_{\mathrm{new}}).
\]
The two formulas agree at the wedge point, so continuity follows from the quotient universal property of \eqref{eq:wedge}.  Define
\[
 \Lambda:\Path X\longrightarrow I,
 \qquad
 \Lambda(\gamma)=\max_{t\in I}h(\gamma(t)).
\]
Postcomposition by \(h\) is continuous on compact-open mapping spaces, and the maximum functional is continuous by \eqref{eq:max-lipschitz}; after \(\Delta\)-kelleyfication the same map remains continuous by functoriality of \(\Delta\)-kelleyfication.

Every old execution path of \(A\) has maximum one after postcomposition by \(h\), whereas every execution path supplied by the new globe has maximum zero.  By \cite[Theorem~2]{Moore3}, every execution path of the pushout \(X\) is a finite Moore composition of such paths.  The maximum of a composite is the maximum of the maxima of its factors, and a surjective reparametrization does not change the image.  Consequently
\[
 \Lambda(\Path X)\subset\{0,1\}.
\]
The two fibres of \(\Lambda\) are therefore open and closed in \(\Path X\). Since \(\Lambda(c)=0\) and \(\Lambda(c*_N b)=1\), the two paths cannot belong to the same path component.
\end{proof}

\begin{lemma}\label{lem:connected}
The paths \(\bar s(c)=c\) and \(\bar s(c*_N b)=c*_N b\) belong to the same path component of \(\Path Y\).
\end{lemma}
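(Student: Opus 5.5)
The idea is to write down an explicit path in \(\Path Y\) from \(c*_N b\) to \(c\), using the old execution paths \(b_u=u\,b\) of \(B\), where \(b(t)=4t(1-t)\). For \(u\in(0,1]\) one has \(m(b_u)=u>0\), so \(b_u\in\mathcal E_{>0}=\Path B\), and \(b_1=b\). Hence for each \(u\in(0,1]\) the normalized composite \(c*_N b_u\) is an execution path of \(Y\). Define
\[
\Gamma:[0,1]\longrightarrow \Path Y,\qquad
\Gamma(u)=\begin{cases} c*_N b_u & u>0,\\ c\circ\psi & u=0,\end{cases}
\]
where \(\psi:I\to I\) is \(\psi(t)=\min(2t,1)\). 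Then \(c*_N b_0\) is the path that runs \(c\) on \([0,1/2]\) and stays at the wedge point \(0\) on \([1/2,1]\), and this is exactly \(c\circ\psi\), since \(c(1)=0\). For \(\Pcat=\M\), \(\psi\in\M(1,1)\), so \(c\circ\psi\) is an execution path of \(Y\), being a reparametrization of the new path \(c\).

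Continuity of \(\Gamma\) into \(\TOP(I,W)\) is straightforward. Since \(W\) is compact metrizable, the compact-open topology on maps \(I\to W\) is the topology of uniform convergence. The distance from \(\Gamma(u)\) to \(\Gamma(u')\) is bounded by the uniform distance from \(b_u\) to \(b_{u'}\) in \(I_{\mathrm{old}}\), which is \(|u-u'|\). Because \([0,1]\) is \(\Delta\)-generated, a continuous map from \([0,1]\) into the subspace \(\Path_{0,0}Y\subset\TOP_{\mathrm{co}}(I,W)\) stays continuous for the \(\Delta\)-kelleyfied topology. So \(\Gamma\) connects \(c*_N b\) to \(c\circ\psi\) in \(\Path Y\).

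It remains to connect \(c\circ\psi\) to \(c\). For \(\Pcat=\M\), take the straight-line homotopy \(\psi_v=(1-v)\psi+v\,\mathrm{id}\). Each \(\psi_v\) is a nondecreasing surjection of \(I\), and \(v\mapsto c\circ\psi_v\) is uniformly continuous.

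The main obstacle is the case \(\Pcat=\G\), where \(\psi\) is not a homeomorphism. Then \(c\circ\psi\) must be shown to be an execution path of \(Y\) by a different route. Here I would use \cite[Theorem~2]{Moore3}: execution paths of the pushout are finite Moore compositions with arbitrary positive lengths adding up to one. To exhibit \(c\circ\psi\) directly as such a composite, I would avoid the constant tail, or sidestep the issue altogether by modifying \(\Gamma\). Instead of the normalized composite, use the Moore composite of \(c\) with length \(1-\lambda(u)\) and \(b_u\) with length \(\lambda(u)\), choosing \(\lambda(u)=u/2\). As \(u\to0\), this path converges uniformly to \(c\) itself: the \(c\)-part is reparametrized by homeomorphisms converging uniformly to the identity, and the \(b_u\)-part is uniformly small. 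At \(u=1\) it is \(c*_N b\) up to a \(\G\)-reparametrization, which connects to \(c*_N b\) through reparametrizations. Since \(\G(1,1)\) is convex, hence path connected, that final connection is also continuous. This variant works uniformly for both \(\Pcat=\G\) and \(\Pcat=\M\), so I would present it as the actual proof. The only delicate point is the uniform estimate near \(u=0\), which follows from \(d_\infty(b_u,0)=u\) together with uniform continuity of \(c\).
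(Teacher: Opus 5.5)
Your final variant (the Moore composite of \(c\) with length \(1-u/2\) and \(b_u\) with length \(u/2\), set equal to \(c\) at \(u=0\)) is exactly the paper's path \(F\) with \(\ell_u=1-u/2\), and it is correct; the paper checks continuity at \(u=0\) by pasting the adjoint map \(I\times I\to W\) and invoking cartesian closedness, which is equivalent to your uniform estimate since \(W\) is compact metrizable. At \(u=1\) the lengths are \(1/2\) and \(1/2\), so you get \(c*_N b\) exactly and the extra connection through \(\G(1,1)\) is unnecessary; you were also right to abandon your first route for \(\Pcat=\G\), since \(c\circ\psi\) has a constant tail and is not an execution path of \(Y\) there.
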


\begin{proof}
For \(u>0\), let \(b_u(t)=u b(t) = 4ut(1-t)\).  It is a nonconstant based loop and hence an execution path of \(B\).  Put \(\ell_u=1-u/2\).  Define \(F: I\to\Path Y\) by \(F(0)=c\) and, for \(u>0\),
\begin{equation}\label{eq:F}
 F(u)(t)=
 \begin{cases}
  c(t/\ell_u),&0\leq t\leq\ell_u,\\[2mm]
  b_u\bigl((t-\ell_u)/(1-\ell_u)\bigr),
     &\ell_u\leq t\leq1.
 \end{cases}
\end{equation}
For \(u>0\), this is an execution path for either choice of \(\Pcat\). Indeed, it is the normalized composite \(c*_N b_u\) precomposed with the piecewise linear increasing homeomorphism
\[
 \theta_u(t)=
 \begin{cases}
  t/(2\ell_u),&0\leq t\leq\ell_u,\\[1mm]
  \dfrac12+(t-\ell_u)/u,&\ell_u\leq t\leq1,
 \end{cases}
\]
which belongs to \(\G(1,1)\subset\Pcat(1,1)\).

It remains to justify continuity at \(u=0\).  Consider the adjoint set map \(\widehat F: I\times I\to W\).  On the closed subset \(\{(u,t)\mid t\leq\ell_u\}\), the first formula in \eqref{eq:F} is continuous because \(\ell_u\geq1/2\).  On the closed subset \(\{(u,t)\mid t\geq\ell_u\}\), the second formula extends continuously to its only point with \(u=0\), which is \((0,1)\), by assigning the wedge point to it: its old coordinate is bounded by \(u\) and therefore tends to zero.  The two formulas agree at \(t=\ell_u\).  The pasting lemma proves that \(\widehat F\) is continuous: for every closed subset of \(W\), its inverse image is the union of two subsets closed in the two closed pieces, and is therefore closed in \(I\times I\). Cartesian closedness of \(\Top\) now yields a continuous transpose
\[
I\longrightarrow \TOP(I,W).
\]
This map takes its values in the set \(\Path Y\).  Since \(I\) is \(\Delta\)-generated, the universal property of \(\Delta\)-kelleyfication shows that \(F: I\to\Path Y\) is continuous for the prescribed execution-path topology.

Finally \(F(0)=c\), while \(\ell_1=1/2\), \(b_1=b\), and hence \(F(1)=c*_N b\).  Thus \(F\) is the required path in \(\Path Y\).
\end{proof}

\begin{theorem}\label{thm:counterexample}
	For every \(\Pcat\in\{\G,\M\}\), the pushout map \(\overline{s}: X\to Y\) is not a weak equivalence.  Consequently, the q-model category of \(\Pcat\)-multipointed \(d\)-spaces is not left proper.  More precisely, left properness already fails for the cobase change \(A\to X\) of the single globular generating q-cofibration~\eqref{eq:i}; neither state generator \(C\) nor \(R\) is involved.
\end{theorem}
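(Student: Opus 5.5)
The plan is to assemble the theorem from Lemma~\ref{lem:contractible}, Lemma~\ref{lem:separated} and Lemma~\ref{lem:connected}, arguing at the level of \(\pi_0\).

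\textbf{Step 1: \(\bar s\) is not a weak equivalence.} By the characterization recalled in Section~\ref{sec:2}, it suffices to show that \(\Path\bar s:\Path X\to\Path Y\) is not a weak homotopy equivalence. I will show that \(\pi_0(\Path\bar s)\) is not injective. By Lemma~\ref{lem:separated}, the execution paths \(c\) and \(c*_N b\) lie in distinct path components of \(\Path X\). By Lemma~\ref{lem:connected}, their images under \(\bar s\) lie in the same path component of \(\Path Y\). Hence \(\pi_0(\Path\bar s)\) identifies two distinct classes, so \(\Path\bar s\) is not a weak homotopy equivalence.

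One point needs care here. The total execution-path space is a coproduct over pairs of states, and both \(X\) and \(Y\) have the single state \(0\). So \(c\) and \(c*_N b\) lie in the same summand \(\Path_{0,0}\), and the comparison of path components really takes place inside one space. This holds for both \(\Pcat=\G\) and \(\Pcat=\M\), because the two lemmas were proved uniformly in \(\Pcat\). In particular, the reparametrization \(\theta_u\) lies in \(\G(1,1)\subset\M(1,1)\).

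\textbf{Step 2: failure of left properness.} The map \(s\) is a weak equivalence by Lemma~\ref{lem:contractible}. The map \(j:A\to X\) is a q-cofibration, being a cobase change of the generator \(i\) in \eqref{eq:i}. The right-hand square is a pushout, since the two squares are pushouts and the outer rectangle, with attaching map the composite with \(s\), is a pushout. Therefore \(\bar s\) is the cobase change of the weak equivalence \(s\) along the cofibration \(j\). Left properness would force \(\bar s\) to be a weak equivalence, which contradicts Step~1.

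\textbf{Step 3: the refined statement.} The only cofibration used is the cobase change of the single generator \(\Glob(\varnothing)\to\Glob(\mathsf D^0)\). The construction involves neither \(C\) nor \(R\), since the state sets are untouched and both states of the globe are mapped to \(0\).

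No step is a genuine obstacle, because the analytic content is already in the lemmas. The only thing to double-check is that the pushouts in the diagram are computed in the category of all \(\Pcat\)-multipointed \(d\)-spaces, as in Remark~\ref{rem:saturated}, so that \cite[Theorem~2]{Moore3} applies to both \(X\) and \(Y\).
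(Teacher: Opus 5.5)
Your proposal is correct and follows the paper's proof essentially verbatim. You get non-injectivity of \(\pi_0(\Path\bar s)\) from Lemmas~\ref{lem:separated} and~\ref{lem:connected}, and then left properness fails because \(s\) is a weak equivalence by Lemma~\ref{lem:contractible} and \(j\) is a cobase change of the generator~\eqref{eq:i}. One small wording fix: the right-hand square is a pushout because the left square and the outer rectangle are pushouts (pasting lemma), not because ``the two squares are pushouts'', which is circular as phrased.
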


\begin{proof}
By Lemma~\ref{lem:separated}, the elements \(c\) and \(c*_N b\) determine distinct elements of \(\pi_0(\Path X)\).  Lemma~\ref{lem:connected} says that their images determine the same element of \(\pi_0(\Path Y)\). Therefore
\[
 \pi_0(\Path\bar s):
 \pi_0(\Path X)\longrightarrow\pi_0(\Path Y)
\]
is not injective.  The map \(\Path\bar s\) is not a weak homotopy equivalence.  Since \(\bar s\) is a bijection on states, the definition of the weak equivalences shows that \(\bar s\) is not a weak equivalence. The right-hand square of the diagram above is a pushout, and \(A\to X\) is a q-cofibration.  Since \(s\) is a weak equivalence, this proves that the q-model structure is not left proper. The cell attachment uses the globular generator~\eqref{eq:i}, rather than either of the state generator \(C\) or \(R\), which proves the final assertion.
\end{proof}


\end{document}